\newtheorem{theorem}{Theorem}
\newtheorem{lemma}{Lemma}
\newtheorem*{hypothesis}{Hypothesis}
\newcommand{\subjclass}[2][2022]{%
  \let\@oldtitle\@title%
  \gdef\@title{\@oldtitle\footnotetext{#1 \emph{Mathematics subject classification.} #2}}%
}
\newcommand{\keywords}[1]{%
  \let\@@oldtitle\@title%
  \gdef\@title{\@@oldtitle\footnotetext{\emph{Key words and phrases.} #1.}}%
}
\title{The number of preimages of iterates of $\phi$ and $\sigma$}
\subjclass{Primary 11N37; Secondary: 11A25, 11N25.}
\keywords{Iterates of arithmetic functions, asymptotic bounds on arithmetic functions, preimages of arithmetic functions}
\author{Agbolade Akande}
\date{\today}
\begin{document}

\maketitle

\section{Introduction}
This paper explores the preimages of iterates of Euler's totient function $\phi$ and the sum-of-divisors function $\sigma$. Define $N_a^k(n) = \#\{ m : a_k(m) = n \}$, where $a_k$ is the $k$-th iterate of the arithmetic function $a$. Our main theorem is an upper bound, for each fixed $k$, on the functions $N_{\phi}^{k}(n)$ and $N_{\sigma}^{k}(n)$. This confirms a conjecture of de Koninck and Katai\cite{Corollary}.\\
\\
We first recall what is known about $N_\sigma^{1}(n)$ and $N_\phi^{1}(n)$. Define \[L(n) =\exp(\log n \cdot \log \log \log n/\log \log n).\] Erd\H{o}s showed that for a certain absolute constant $c > 0$, and all large enough $n$,
$N_{\phi}^1(n) \le n/L(n)^{c}$ \cite{Erdos}. Carl Pomerance found a more optimal upper bound for the number of preimages of $\phi$ and $\sigma$ \cite{Pom}:
\renewcommand*{\thetheorem}{\Alph{theorem}}
\begin{theorem}[Pomerance]
As $n \rightarrow \infty$,
$$N^1_\phi(n) \le \frac{n}{L(n)^{1+o(1)}},$$
$$N^1_\sigma(n) \le \frac{n}{L(n)^{1+o(1)}}.$$
\end{theorem}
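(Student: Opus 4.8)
The plan is to prove the bound for $\phi$ and to observe that $\sigma$ is handled identically. Fix a large $n$ and write $A(n) = N^1_\phi(n) = \#\{m : \phi(m) = n\}$ for the number of preimages. Because $\phi(m) \gg m/\log\log m$, any such $m$ satisfies $m \ll n \log\log n$, so $A(n)$ is finite; the entire content of the theorem is to improve this trivial bound by a factor $L(n)^{1+o(1)}$, sharpening Erd\H{o}s's exponent $c$ to $1+o(1)$. First I would reduce to squarefree preimages. If $p^a \| m$ with $a \ge 2$ then $p \mid \phi(m) = n$, so writing $m = m_1 m_2$ with $m_1$ supported on the primes dividing $n$ and $m_2$ squarefree and coprime to $n$, the number of possibilities for $m_1$ is at most $\prod_{p \mid n}(v_p(n) + 2) \le d(n)^2$, where $p^{v_p(n)} \| n$. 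Since the maximal order of the divisor function gives $d(n) = L(n)^{o(1)}$, the part $m_1$ costs only a factor $L(n)^{o(1)}$, and it suffices to count, for each divisor $n' \mid n$, the squarefree $m_2 = q_1 \cdots q_r$ formed from distinct primes $q_i$ with $q_i - 1 \mid n$ and $\prod_i (q_i - 1) = n'$.

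The heart of the matter is a Rankin (Mellin) bound. Put $\mathcal P(n) = \{p \text{ prime} : p - 1 \mid n\}$. Each admissible $m_2$ corresponds to a subset $S \subseteq \mathcal P(n)$ with $\prod_{p \in S}(p-1) = n'$; for any $s \in (0,1]$ such a subset contributes $\big(\prod_{p \in S}(p-1)/n'\big)^s = 1$, while every other subset of $\mathcal P(n)$ contributes a nonnegative amount, whence
\[
\#\Big\{ S \subseteq \mathcal P(n) : \prod_{p \in S}(p-1) = n' \Big\} \le (n')^{-s}\prod_{p \in \mathcal P(n)}\big(1 + (p-1)^s\big).
\]
Inserting the $L(n)^{o(1)}$ losses from the reduction and keeping the dominant contribution $n' = n$, I would reduce the theorem to the inequality
\[
\min_{0 < s \le 1}\Big( {-}s \log n + \sum_{p \in \mathcal P(n)} \log\big(1 + (p-1)^s\big) \Big) \le \log n - (1+o(1))\log L(n),
\]
an optimization over the single parameter $s$.

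The main obstacle is to estimate the sum $\sum_{p \in \mathcal P(n)} \log(1 + (p-1)^s) = \sum_{d \mid n,\, d+1 \text{ prime}} \log(1 + d^s)$ and to locate the optimal $s$, because the primality condition on $d + 1$ restricts $d$ to a sparse and erratic subfamily of the divisors of $n$; simply discarding it returns the worthless bound $A(n) \le 2^{d(n)}$, which can exceed $n$. The quantitative input I would need is an upper bound, uniform in $n$ and $t$, for $\#\{p \le t : p - 1 \mid n\}$, together with control of how the admissible $d = p-1$ are spread over dyadic ranges $[2^j, 2^{j+1})$. This is a sieve problem---one is detecting primes among the shifts $d+1$ of divisors of $n$---so I would appeal to a Brun- or Selberg-type upper bound sieve to dominate the divisor sum by the expected density of prime shifts. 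Granting such estimates, the competition between $-s\log n$ and the divisor sum is governed by the sizes of $\log\log n$ and $\log\log\log n$ relative to $\log n$, and the optimal $s$ leaves a deficit of precisely $(1+o(1))\log L(n) = (1+o(1))\,\dfrac{\log n \cdot \log\log\log n}{\log\log n}$, which is the claimed saving.

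The passage to $\sigma$ requires only replacing $p - 1$ by $p + 1$ throughout: if $\sigma(m) = n$ and $p \| m$ then $\sigma(p) = p + 1$ divides the corresponding factor of $n$, the prime power part again contributes a factor $L(n)^{o(1)}$, and the Rankin bound runs over the set $\{p \text{ prime} : p + 1 \mid n\}$. The sieve estimates for primes $p$ with $p + 1 \mid n$ have the same strength as those for $p - 1 \mid n$, so the identical optimization delivers $N^1_\sigma(n) \le n/L(n)^{1+o(1)}$.
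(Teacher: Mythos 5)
First, note what you are comparing against: the paper does not prove this statement at all. It is Theorem A, quoted from Pomerance's paper [Pom], and the paper only records that the $\sigma$ case follows by the same reasoning; the paper also tells you what that reasoning needs, namely ``bounds on the number of smooth shifted primes (shifted primes $p-1$ without large prime factors).'' So your proposal has to be judged against Pomerance's actual method, and there it has a genuine gap. Your preliminary reductions are fine: the passage to $m_2$ squarefree and coprime to $n$ costs only $d(n)^{O(1)} = L(n)^{o(1)}$, and the Rankin inequality over subsets of $\mathcal{P}(n)=\{p : p-1 \mid n\}$ is a triviality. But at that point you have proved nothing, and the entire theorem is deferred to two unproven claims: (i) a uniform-in-$n$ upper bound for $\#\{p \le t : p-1 \mid n\}$, which you say comes from ``a Brun- or Selberg-type upper bound sieve,'' and (ii) the assertion that the optimization over $s$ ``leaves a deficit of precisely $(1+o(1))\log L(n)$,'' which is exactly the statement to be proven, asserted rather than derived. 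Claim (i) is the fatal one: an upper-bound sieve requires the underlying sequence --- here the shifted divisors $d+1$, $d \mid n$ --- to be equidistributed in residue classes to small moduli with a usable level of distribution, and the divisor set of a fixed $n$ has no such property (the divisors of a prime power or of a primorial are maximally biased modulo small primes). Detecting primes among $\{d+1 : d \mid n\}$ uniformly in $n$ is not a sieve problem, and no nontrivial bound of the strength you need is available this way.

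This is precisely why Erd\H{o}s and Pomerance route the argument through \emph{smoothness} rather than through the raw condition $p-1 \mid n$. The set of primes $p \le t$ whose shift $p-1$ is $y$-smooth \emph{can} be bounded: one factors $p-1 = ab$ with $a$ forced into a controlled range (possible because all prime factors of $p-1$ are at most $y$), applies Brun--Titchmarsh to the progression $1 \bmod a$, and sums $1/\phi(a)$ over smooth $a$ using $\Psi(x,y)$ estimates. The complementary preimages --- those all of whose prime factors exceeding a threshold have non-smooth shifts --- are handled combinatorially: since each such prime factor forces a prime factor $> y$ into $n$, and $\Omega_{>y}(n) \le \log n / \log y$, such preimages have very few large prime factors and are therefore rare, by Hardy--Ramanujan-type counting. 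Balancing the two cases at $y \approx \exp((\log\log n)^2/\log\log\log n)$ produces the saving $L(n)^{1+o(1)}$. (This two-case skeleton, implemented through moment-generating-function bounds on $\Omega_{>z}(a(n))$ and $\Omega(a(n))$, is also exactly what the present paper does in Sections 2 and 3 for the inductive step, so the structure is visible in the text you were given.) Your proposal never engages with this mechanism, and without it the minimization you wrote down cannot be carried out; to repair the argument you would need to replace the condition $p - 1 \mid n$ by a smooth/non-smooth dichotomy and import the smooth-shifted-primes lemma, at which point you have reconstructed Pomerance's proof rather than found an alternative to it.
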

\noindent
The proof for $N^1_\sigma(n)$ is not given explicitly in \cite{Pom} but it follows the same reasoning for $N_\phi^1(n)$. (This observation is made explicitly in \cite{Common}). Erd\H{o}s' and Pomerance's proofs require bounds on the number of smooth shifted primes (shifted primes $p-1$ without large prime factors). Under a plausible conjecture on the distribution of these primes, Pomerance shows in \cite{Pom} that the bounds in Theorem A are optimal, in the sense that the exponent $1+o(1)$ cannot be replaced with $1-\epsilon$ for any fixed $\epsilon > 0$. We will later use this idea to formulate a conjecture for a lower bound. \\
\\
Our upper bounds on $N_\phi^{k}(n)$  and $N_{\sigma}^{k}(n)$ borrow ideas from work of Pollack and Vandehey \cite{Normal} and Pollack \cite{Factor}. The proof goes by induction on $k$, the input for the case $k=1$ being the results of Pomerance (though the weaker results of Erd\H{o}s would also suffice). Let $a \in \{\phi, \sigma\}$. Observe that if $a_k(m)=n$, then $a(m)$ belongs to the set $S = \{\ell: a_{k-1}(\ell) = n\}$ counted by $N_a^{k-1}(n)$. We can assume by our the induction hypothesis that we have a usable upper bound on $|S|$. We split $S$ into three separate pieces. In the first piece, we put those $m \in S$ with many large prime factors. In the second piece, we place those $m$ which do not have many large prime factors, but have many prime factors in total. The final piece contains the leftovers. We then estimate the size, under $a^{-1}$, of each of these parts; this is carried out in sections \ref{Proof of Theorem 1.1} and \ref{Remarks for lower bound} after establishing some technical preliminary estimates in Section \ref{Bounds proof}.\\
\\
For each positive integer $k$ and each real number $\beta$, define \[L_{k,\beta}(n) = \exp\left(\log n \cdot (\log \log \log n)^\beta/(\log \log n)^k \right).\]
\noindent
The end result is as follows:
\renewcommand*{\thetheorem}{\arabic{theorem}}
\setcounter{theorem}{0}
\begin{theorem}\label{New Pomerance upper bound}
Fix $a \in \{\phi,\sigma\}$. Fix a positive integer $k$ and $\beta < k-1$. As $n \rightarrow \infty$, 
$$N_a^k(n) \le \frac{n}{L_{k,\beta+1}(n)^{1+o(1)}}.$$

\end{theorem}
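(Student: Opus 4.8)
The plan is to induct on $k$, using Pomerance's Theorem A as the base case. When $k=1$ the hypothesis $\beta < k-1$ reads $\beta < 0$, so $\beta + 1 < 1$; since $\log\log\log n \ge 1$ for large $n$ this forces $L_{1,\beta+1}(n) \le L(n)$, and Theorem A gives $N_a^1(n) \le n/L(n)^{1+o(1)} \le n/L_{1,\beta+1}(n)^{1+o(1)}$. For the inductive step, fix $\beta < k-1$ and apply the level-$(k-1)$ statement with $\beta' = \beta - 1$, which is admissible because $\beta' < k-2$. The point of departure is the identity $N_a^k(n) = \sum_{\ell \in S} N_a^1(\ell)$, where $S = \{\ell : a_{k-1}(\ell) = n\}$ and $|S| = N_a^{k-1}(n) \le n/L_{k-1,\beta}(n)^{1+o(1)}$. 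A preliminary reduction records that each $\ell \in S$ differs in size from $n$ by at most a factor $(\log\log n)^{O(k)}$ (one step of $\phi$ or $\sigma$ changes an integer by at most a factor $O(\log\log)$), so $\log\ell \sim \log n$ and every iterated logarithm, hence every $L_{\cdot,\cdot}(\ell)$, agrees with its value at $n$ up to a factor $1+o(1)$; I will freely replace $L_{\cdot,\cdot}(\ell)$ by $L_{\cdot,\cdot}(n)$.

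The estimates gathered in Section \ref{Bounds proof} are of two flavours. The first is a pointwise bound on $N_a^1(\ell)$ governed by the anatomy of $\ell$: a preimage is assembled from prime-power blocks $q$ with $a(q) \mid \ell$, the number of admissible blocks is controlled by the divisors of $\ell$ and hence by its number of prime factors, so when $\ell$ has few prime factors and none of them large, $N_a^1(\ell)$ is small. The second flavour consists of Rankin--Mertens counts for the number of integers up to $x = n^{1+o(1)}$ with at least $s$ prime factors exceeding a cutoff $y$, and with at least $t$ prime factors in total. Using these I split $S = S_1 \sqcup S_2 \sqcup S_3$: into $S_1$ go the $\ell$ with at least $s$ prime factors exceeding $y$; into $S_2$ the remaining $\ell$ with at least $t$ prime factors altogether; and $S_3$ collects the leftovers, whose members have few large and few total prime factors.

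On $S_3$ I combine the pointwise bound with the inductive estimate for $|S|$, writing $\sum_{\ell \in S_3} N_a^1(\ell) \le |S| \cdot \max_{\ell \in S_3} N_a^1(\ell)$. Choosing the threshold $t$ so that $\max_{\ell \in S_3} N_a^1(\ell) \le L_{k-1,\beta}(n)/L_{k,\beta+1}(n)^{1+o(1)}$ --- which is possible because $L_{k,\beta+1}(n) = L_{k-1,\beta}(n)^{o(1)}$ leaves ample room --- the product collapses to $n/L_{k,\beta+1}(n)^{1+o(1)}$. This is precisely the step that advances the index from $(k-1,\beta)$ to $(k,\beta+1)$: the new level of iteration contributes one extra factor $\log\log n$ in the denominator and one extra $\log\log\log n$ in the numerator. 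On $S_1$ and $S_2$ I instead bound $\sum_{\ell \in S_i} N_a^1(\ell) = \#\{m : a(m) \in S_i\}$ by transferring the prime-factor condition on $\ell = a(m)$ to a condition on $m$ and applying the Rankin--Mertens counts; with $s$, $t$, $y$ tuned, these sets are already sparse enough --- of density below $1/L_{k,\beta+1}(n)^{1+o(1)}$ --- that a crude preimage bound finishes the estimate.

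I expect the main obstacle to be twofold and concentrated in Section \ref{Bounds proof}: first, proving the pointwise preimage bound with a dependence on the number of prime factors sharp enough that the optimal choice of $t$ produces exactly the power $(\log\log\log n)^{\beta+1}$, the increment of $\beta$ being the delicate point; and second, making the transfer of prime-factor structure from $a(m)$ to $m$ in Parts $1$ and $2$ rigorous and uniform, so that the $o(1)$ terms remain uniform when summed over all $\ell \in S$. Balancing the three contributions to fix $s$, $t$, and $y$ is then a careful but routine optimization.
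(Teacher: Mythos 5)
Your overall architecture coincides with the paper's: induction on $k$ with Pomerance's Theorem A as the base case, the reduction $N_a^k(n)=\#a^{-1}(S)$ with $S=a_{k-1}^{-1}(n)$, and a three-way split of $S$ according to (i) many prime factors exceeding a cutoff $z$, (ii) many prime factors in total, (iii) the leftovers, with Rankin-type moment bounds on $\sum_{m\le x}A^{\Omega_{>z}(a(m))}$ and $\sum_{m\le x}B^{\Omega(a(m))}$ disposing of (i) and (ii) and the inductive hypothesis entering only through (iii). Your base case and the bookkeeping of the parameters $(k,\beta)$ are also correct.

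The gap is in your treatment of $S_3$. You propose a pointwise bound on $N_a^1(\ell)$ obtained by counting ``admissible blocks'' $q$ with $a(q)\mid\ell$ via the divisors of $\ell$. But the number of preimages is not controlled by the number of admissible blocks: a preimage is an assembly of up to $\Omega(\ell)+O(1)$ blocks, so the natural bound this yields is of the shape $(\tau(\ell)\log \ell)^{\Omega(\ell)+1}=\exp(O(\Omega(\ell)^2))$. The threshold $t$ defining $S_2$ cannot be taken much below $\log x/(\log\log x)^{k}$ (otherwise the Rankin bound for $\Omega(a(m))$, whose error term is already $\exp(O((\log x)^{3/4}))$, no longer beats $1/L_{k,\beta+1}(x)$), and then $\Omega(\ell)^2\gg\log x$ for $\ell\in S_3$, so the pointwise bound is worse than trivial; shrinking $t$ to rescue Part 3 breaks Part 2. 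The paper escapes this by never bounding $N_a^1(\ell)$ through the full anatomy of $\ell$: for $\ell\in R$ it bounds $\#\{m\le x:\ell'\mid a(m)\}$, where $\ell'$ is only the $z$-rough part of $\ell$, using the Pollack--Vandehey estimate $\frac{x}{\ell'}(8j\log^2(ex))^j$ with $j=\Omega(\ell')$ (Lemma 4). The combinatorial factor then involves only the very small count $j<\log L_{k+1,\alpha}(x)$ of large prime factors, while the cap on $\Omega(\ell)$ is used only to make the smooth part $\ell''=\ell/\ell'$ of size $L_{k,\alpha}(x)^{o(1)}$, so that $x/\ell'=(x/\ell)\ell''$ is small; summing $1/\ell$ over $S$ via the inductive hypothesis then closes the induction. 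You would need to import that divisibility lemma (or prove an equivalent) for your Part 3 to go through.
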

\noindent
Theorem \ref{New Pomerance upper bound} establishes a strong form of a conjecture of de Koninck and Katai \cite[Conjecture 2]{Corollary}, who had $c_k n/(\log{n})^{9}$ in place of $n/L_{k,\beta+1}(n)^{1+o(1)}$. As a consequence, we derive the following theorem, which was proved conditionally on their conjecture in \cite{Corollary}:
\renewcommand*{\thetheorem}{\Alph{theorem}}
\begin{theorem}
Let $e(y) = e^{2\pi iy}$, and let $\mathcal{M}_1$ be the set of all multiplicative arithmetic functions $f$ such that $\lvert f(n) \rvert = 1$ for all integers $n$. Fix $k \in \mathbb{N}$, then for almost all real numbers $\alpha \in (0,1)$
$$\sup \limits_{f \in \mathcal{M}_1} \frac{1}{x}\left \lvert \sum \limits_{n \le x} f(n) e(\alpha \phi_k(n)) \right \rvert \rightarrow 0$$
$$\sup \limits_{f \in \mathcal{M}_1} \frac{1}{x}\left \lvert \sum \limits_{n \le x} f(n) e(\alpha \sigma_k(n)) \right \rvert \rightarrow 0$$
as $x \rightarrow \infty$.
\end{theorem}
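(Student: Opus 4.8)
The plan is to obtain this theorem as a corollary of Theorem \ref{New Pomerance upper bound} together with the conditional deduction of de Koninck and Kátai. By symmetry the two displays are proved identically (using $N_\phi^k$ and $N_\sigma^k$ respectively), so I fix $a\in\{\phi,\sigma\}$ and write $g=a_k$. The first step is a pure growth comparison: Conjecture 2 of \cite{Corollary} asks only for $N_a^k(n)\le c_k n/(\log n)^9$, whereas Theorem \ref{New Pomerance upper bound} gives $N_a^k(n)\le n/L_{k,\beta+1}(n)^{1+o(1)}$ for every fixed $\beta<k-1$. Taking $\beta=k-2$, so that $(\log\log\log n)^{\beta+1}=(\log\log\log n)^{k-1}\ge 1$ for large $n$, we find
$$\log L_{k,\beta+1}(n)=\log n\cdot\frac{(\log\log\log n)^{\beta+1}}{(\log\log n)^{k}}\ge\frac{\log n}{(\log\log n)^{k}}\gg\log\log n,$$
so $L_{k,\beta+1}(n)^{1+o(1)}\ge(\log n)^{9}$ for all large $n$. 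Thus Theorem \ref{New Pomerance upper bound} establishes Conjecture 2 with room to spare, and it remains to run the de Koninck–Kátai argument, which I now outline.

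The engine is a second-moment estimate in $\alpha$ exploiting the \emph{integrality} of $g=a_k$. For fixed $f\in\mathcal{M}_1$, orthogonality of $\alpha\mapsto e(\alpha t)$ over $t\in\mathbb{Z}$ gives
$$\int_0^1\Big|\frac1x\sum_{n\le x}f(n)e(\alpha g(n))\Big|^2 d\alpha=\frac1{x^2}\sum_{\substack{n,m\le x\\ g(n)=g(m)}}f(n)\overline{f(m)}\le\frac1{x^2}\sum_{n\le x}N_a^k(g(n)),$$
and the final bound is \emph{uniform in} $f$ because $|f(n)\overline{f(m)}|=1$. For $a=\phi$ we have $g(n)=\phi_k(n)\le n\le x$, and since $\phi_k(n)\gg n/(\log\log n)^{k}$ we get $\log g(n)\ge\tfrac12\log n$ for all large $n$; plugging in Conjecture 2 yields $\int_0^1|\cdots|^2 d\alpha\ll(\log x)^{-9}$ (the case $a=\sigma$ is the same after the routine remark that $\log\sigma_k(n)\asymp\log n$). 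Choosing a sparse sequence $x_j$ with $\log x_j\asymp\sqrt j$ makes these bounds summable while $x_{j+1}/x_j\to1$, so Chebyshev and Borel--Cantelli give decay along $x_j$ for almost every $\alpha$, and monotone interpolation between consecutive $x_j$ upgrades this to all $x$. This settles the \emph{single}-$f$ statement.

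The genuine difficulty is the supremum over the uncountable family $\mathcal{M}_1$: a bound on $\int_0^1|S_f|^2 d\alpha$ that is uniform in $f$ does not survive pulling $\sup_f$ inside the integral, and one cannot union-bound over $\mathcal{M}_1$. This is exactly what a Kátai-type criterion (the Daboussi--Kátai / Bourgain--Sarnak--Ziegler principle) is designed for: it bounds $\sup_{f\in\mathcal{M}_1}\big|\tfrac1x\sum_{n\le x}f(n)e(\alpha g(n))\big|$ in terms of the $f$-free correlation sums
$$\frac1x\sum_{n\le x}e\big(\alpha(g(pn)-g(qn))\big),\qquad p\ne q\ \text{prime},\ p,q\le P,$$
plus an error $O\big((\log\log P/\log P)^{1/2}\big)$. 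Each such correlation is controlled by the \emph{same} second-moment method applied to the integer-valued function $n\mapsto g(pn)-g(qn)$: its $L^2(d\alpha)$-norm counts quadruples with $g(pn)-g(qn)=g(pm)-g(qm)$. For $k=1$ this collapses, via $\phi(pn)=(p-1)\phi(n)$ on $(n,pq)=1$, to collisions of $g$ itself and is bounded directly by $N_a^1$; intersecting the resulting full-measure sets over the countably many triples $(p,q,x_j)$ and letting $P=P(x)\to\infty$ slowly then forces the right-hand side of Kátai's inequality to $0$ for almost every $\alpha$ and every $f$ simultaneously.

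The step I expect to be the crux is this last correlation estimate for $k\ge2$. Unlike the case $k=1$, the quantity $g(pn)=\phi_k(pn)$ does not factor through $\phi(n)$, so bounding the collisions of the difference $g(p\cdot)-g(q\cdot)$ genuinely requires the power-of-logarithm saving in $N_a^k$ rather than a mere $o(1)$, the extra room being consumed by the sum over the $\asymp\pi(P)^2$ prime pairs and the union over the sequence $x_j$. This is precisely the computation carried out under Conjecture 2 in \cite{Corollary}; supplying that conjecture unconditionally, which is the content of Theorem \ref{New Pomerance upper bound}, completes the proof.
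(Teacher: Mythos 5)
Your proposal takes essentially the same route as the paper: Theorem B is obtained there purely by observing that Theorem \ref{New Pomerance upper bound} implies Conjecture 2 of de Koninck and K\'atai (since $L_{k,\beta+1}(n)^{1+o(1)}$ eventually dominates any fixed power of $\log n$) and then invoking the proof of Theorem B given conditionally on that conjecture in \cite{Corollary}. Your explicit growth comparison and your sketch of the second-moment/K\'atai-criterion machinery elaborate on what the paper leaves to the citation, but the logical content is identical.
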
\setcounter{theorem}{1}
\noindent
\textbf{Notation and conventions.} The letter $p$ is reserved throughout for primes. We use $\Omega(n)$ for the number of prime factors of $n$ counted with multiplicity and $\Omega_{>z}(n)$ for the number of prime factors of $n$ exceeding $z$, counted with multiplicity.\\
\\
The $z$-rough and $z$-smooth parts of a positive integer $m$ denotes the largest divisors of $m$ composed of primes exceeding $z$, and not exceeding $z$ respectively.\\ 
\\
We will use $\Gamma(z)$ to represent the gamma function defined by the improper integral:
$$\Gamma(z) = \int_0^\infty t^{z-1}e^{-t}dt.$$

\section{Bounds on some moment generating functions}\label{Bounds proof}
In this section, we will find an upper bound on some moment generating functions involving the total number of prime factors and the number of large prime factors of $a(n)$.\\
\\
The next lemma gives our estimate for large prime factors of $a(n)$.\\
\\
Fix $\eta \in (0,1)$ and let
$$z = \exp((\log \log x)^{1/2}),$$
$$A = (\log \log x)^{1-\eta}.$$
\begin{lemma}\label{Moment generating function A}
Fix $a\in \{\phi,\sigma\}$. As $x \rightarrow \infty$
$$\sum \limits_{n \le x}A^{\Omega_{>z}(a(n))} \le x\exp\left(O\left(\frac{\log x}{\exp((\log \log x)^{\eta/2})}\right)\right).$$
\end{lemma}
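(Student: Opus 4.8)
The plan is to reduce the sum to a Dirichlet‑type sum over the ``$z$‑rough divisors'' of $a(n)$, exploiting that $a$ is multiplicative. Let $h$ be the completely multiplicative function with $h(q)=A$ for primes $q>z$ and $h(q)=1$ for $q\le z$, so that $h(m)=A^{\Omega_{>z}(m)}$ for every $m$. Writing $c=\mu * h$, one checks that $c$ is multiplicative, supported on $z$‑rough integers, with $c(q^{j})=(A-1)A^{j-1}$ for $q>z$, and $A^{\Omega_{>z}(m)}=\sum_{d\mid m}c(d)$. Applying this with $m=a(n)$ and interchanging the order of summation gives
\[
\sum_{n\le x}A^{\Omega_{>z}(a(n))}=\sum_{d\ \text{$z$-rough}}c(d)\,\#\{n\le x:\ d\mid a(n)\}.
\]
This is the right normalization because it respects the constraint $n\le x$: a large rough $d$ forces $a(n)$, hence $n$, to be large, and this is visible in the count $\#\{n\le x: d\mid a(n)\}$, whereas a naive Euler‑product (Rankin) bound would ignore it and overshoot badly.

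The heart of the matter is a uniform upper bound $\#\{n\le x:\ d\mid a(n)\}\le x\,\rho(d)$ with $\rho$ multiplicative and $\rho(q^{j})$ suitably small. For $a=\phi$, the divisibility $q^{j}\mid \phi(n)$ is witnessed by prime factors of $n$ lying in residue classes $\equiv 1$ modulo powers of $q$ (together with the rare contribution of prime powers $q^{i}\parallel n$). I would bound the count by summing over the choices of witnessing primes, using $\#\{n\le x:\ p_{1}\cdots p_{r}\mid n\}\le x/(p_{1}\cdots p_{r})$ and the Mertens/Brun--Titchmarsh estimate $\sum_{p\le x,\ p\equiv 1\,(q)}1/p\ll (\log\log x)/q$. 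For $a=\sigma$ the same scheme applies after replacing the condition $p\equiv 1\ (q)$ by the requirement that $p$ be one of the at most $v$ roots of $1+X+\cdots+X^{v}\equiv 0\ (\mathrm{mod}\ q)$, which again yields $\rho(q)\ll (\log\log x)/q$ and rapidly decaying $\rho(q^{j})$.

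With such a $\rho$ in hand the sum factors as an Euler product $\prod_{z<q\le x}\bigl(1+\sum_{j\ge1}(A-1)A^{j-1}\rho(q^{j})\bigr)$. The decisive feature of the chosen parameters is that $z$ is large enough that $A\ll z\le q$ for every $q>z$, so the prime‑power terms ($j\ge 2$) form a convergent series dominated by the $j=1$ contribution; taking logarithms bounds the exponent by $\ll\sum_{z<q\le x}A\,\rho(q)\ll A(\log\log x)^{2}=(\log\log x)^{3-\eta}$. Since $(\log\log x)^{3-\eta}\le \log x\cdot\exp(-(\log\log x)^{\eta/2})$ for all large $x$, this is comfortably absorbed into the stated bound, giving $\sum_{n\le x}A^{\Omega_{>z}(a(n))}\le x\exp\bigl(O(\log x/\exp((\log\log x)^{\eta/2}))\bigr)$. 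I expect the main obstacle to be the uniform divisor‑count estimate: proving the multiplicative majorant $\rho$ with the correct prime‑power decay \emph{uniformly over the full range of $z$-rough $d$ up to $x$}, handling the $\sigma$ case via the modular root count, and arranging the witnessing primes to be distinct so that the union bound underlying $\rho$ is legitimate.
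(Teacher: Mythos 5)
There is a genuine gap, and it sits exactly where you suspected. Your divisor decomposition $A^{\Omega_{>z}(m)}=\sum_{d\mid m}c(d)$ with $c(q^{j})=(A-1)A^{j-1}$ for $q>z$ is the same device the paper uses (its function $g$), but the paper applies it only to $a(p)$ for \emph{prime} $p$, where $r\mid a(p)$ is a single divisibility condition bounded trivially by $(T+1)/r$; the rest of the argument is Rankin's trick over the factorization of $n$. You instead apply it to $a(n)$ for general $n$, which forces you to bound $\#\{n\le x:d\mid a(n)\}$ uniformly for $z$-rough $d$ with up to $\Omega(d)\approx\log x/\log z$ prime factors. No multiplicative majorant with $\rho(q)\ll(\log\log x)/q$ is available there: the union bound must range over the ways of distributing the prime-power factors of $d$ among the primes dividing $n$ (a Bell number $\approx\ell^{\ell}$ of configurations for $\ell=\Omega(d)$), and the resulting estimate --- precisely Lemma 4 of the paper, due to Pollack and Vandehey --- is $\frac{x}{d}(8\ell\log^{2}(ex))^{\ell}$. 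For $\ell\approx\log x/\log z$ the factor $(8\ell\log^2(ex))^\ell$ exceeds every fixed power of $x$, so the Euler product $\prod_{z<q\le x}(1+\sum_j c(q^j)\rho(q^j))$ you want to form simply does not close. This is not a technicality to be patched: the paper's Section 3 uses Lemma 4 only after first removing (via the present Lemma 1) the $\ell$ with many large prime factors, so your plan is circular in spirit.

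Independently, your target bound $x\exp\bigl(O((\log\log x)^{3-\eta})\bigr)$ is false, so no repair can succeed. Take $n=p_{1}\cdots p_{r}$ with $r=(\log\log x)^{5}$ and all $p_{i}\asymp x^{1/r}$. There are at least $x\exp\bigl(-(1+o(1))r\log\log x\bigr)$ such $n$, and for almost all of them each $p_{i}-1$ has $\gg\log\log x$ prime factors exceeding $z$ (normal order of $\omega(p-1)$ minus the $O(\log\log z)$ small ones), so $\Omega_{>z}(a(n))\gg r\log\log x$. These $n$ alone contribute at least
\[
x\exp\Bigl(\bigl((1-\eta)+o(1)\bigr)\,r\,(\log\log x)(\log\log\log x)-(1+o(1))\,r\log\log x\Bigr)=x\exp\bigl((\log\log x)^{6+o(1)}\bigr),
\]
which dwarfs $(\log\log x)^{3-\eta}$ in the exponent. (Pushing $r$ toward $\log x/\exp(\log\log x/\log\log\log x)$ shows the true exponent is genuinely of shape $\log x$ divided by a subpolynomial-in-$\log x$ quantity, which is why the paper's bound has the form it does and why its proof must tolerate the $\Gamma(A)/(c-1)^{A}$ loss from Rankin's method rather than aim for a polylog exponent.) Your parenthetical criticism that the Rankin route ``overshoots badly'' by ignoring $n\le x$ is therefore misplaced: the $x^{c}/n^{c}$ weight accounts for that constraint, and the resulting bound, while lossy, is of the correct general shape, whereas the polylog exponent you are aiming for is unattainable.
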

\begin{proof}
Let $c \in (1,2)$. By Rankin's trick \cite{Rankin},
$$\sum \limits_{n \le x}A^{\Omega_{>z}(a(n))} \le x^c\sum \limits_{n \le x}\frac{A^{\Omega_{>z}(a(n))}}{n^c} \le x^c \prod \limits_{p \le x}\left(1+\sum \limits_{k=1}^\infty\frac{A^{\Omega_{>z}(a(p^k))}}{p^{kc}} \right)$$
$$\le x^c\exp \left(\sum \limits_{p\le x}\sum \limits_{k=1}^\infty\frac{A^{\Omega_{>z}(a(p^k))}}{p^{kc}} \right).$$
Notice that 
$$\Omega_{>z}(a(p^k)) \le \frac{\log(a(p^k))}{\log z} \le \frac{\log(\sigma(p^k))}{\log z} = \frac{\log\left(\frac{p^{k+1} - 1}{p-1}\right)}{\log z} \le \frac{(k+1)\log p - \log(p-1)}{\log z}.$$
Applying this back to the original sum,
$$ \sum \limits_{n \le x}A^{\Omega_{>z}(a(n))} \le x^c\exp\left(\sum \limits_{2\le p\le x}\left[ \frac{A^{\Omega_{>z}(a(p))}}{p^c}+ \frac{1}{A^{\frac{\log(p-1)}{\log z}}} \sum \limits_{k\ge 2}\frac{A^{\frac{(k+1)\log p}{\log z}}}{p^{kc}} \right]\right).$$
For $x$ large, $A < z^{1/3}$ so that
$$A^{\frac{\log p}{\log z}}/p^{c} \le z^{\frac{\log p}{3\log z}}/p^{c} \le p^{1/3-c} <p^{-2/3} \le 2^{-2/3}.$$
Therefore,
$$\sum \limits_{k\ge 2}\frac{A^{\frac{(k+1)\log p}{\log z}}}{p^{kc}} \ll \frac{A^{\frac{3\log p}{\log z}}}{p^{2c}}.$$
Thus,
$$\sum \limits_{n \le x}A^{\Omega_{>z}(a(n))} \le x \exp\left((c-1)\log x + O\left(\sum \limits_{2\le p \le x} \frac{A^{\Omega_{>z}(a(p))}}{p^c} + \sum \limits_{2\le p \le x}\frac{A^{\frac{3\log p - \log(p-1)}{\log z}}}{p^{2c}} \right) \right).$$
Notice that $\frac{A^{\frac{3\log p - \log(p-1)}{\log z}}}{p^{2c}} = \frac{\left(\frac{p^3}{p-1}\right)^{\frac{\log A}{\log z}}}{p^{2c}} \ll \frac{p^{\frac{2\log A}{\log z}}}{p^{2c}} \le p^{-4/3}$. Hence, $\sum \limits_{2\le p \le x}\frac{A^{\frac{3\log p - \log(p-1)}{\log z}}}{p^{2c}} \le \sum \limits_{2\le p \le x}p^{-4/3} = O(1) $.\\
\\
Thus, we need to bound $\sum \limits_{p \le x}\frac{A^{\Omega_{>z}(a(p))}}{p^c}$. Let $g$ be the arithmetic function defined implicitly by $A^{\Omega_{>z}(d)} = \sum \limits_{r \mid d}g(r)$. Then $g$ is multiplicative and $g(p^e) = A^e - A^{e-1}$ when $p > z$ and $g(p^e) = 0$ otherwise. Hence,
$$\sum \limits_{p \le T}A^{\Omega_{>z}(a(p))} = \sum \limits_{r \le T}g(r)\sum \limits_{\underset{r \mid a(p)}{p\le T}}1.$$
Since $a \in \left\{\phi,\sigma \right\}$, $\sum \limits_{\underset{r \mid a(p)}{p\le T}}1 \le \frac{(T+1)}{r}$. Hence,
$$\sum \limits_{p \le T}A^{\Omega_{>z}(a(p))} \le (T+1)\sum \limits_{r \le T}\frac{g(r)}{r}= (T+1)\prod \limits_{z<p \le T}\left(1+\frac{A-1}{p}+ \frac{A^2 - A}{p^2} +\cdots\right)$$
$$\le (T+1)\exp\left((A-1) \sum \limits_{z<p\le T}\frac{1}{p} \right)\exp\left(\sum \limits_{z<p\le T}\frac{A^2-A}{p^2} + \frac{A^3-A^2}{p^3} + \cdots \right).$$
The geometric ratio of successive terms in the second summation is $A/p \le A/z < \frac{1}{2}$. Hence,
$$\sum \limits_{z<p\le T}\left(\frac{A^2-A}{p^2} + \frac{A^3-A^2}{p^3} + \cdots \right) < 2\sum \limits_{z<p\le T}\frac{A^2-A}{p^2}\le 2A^2\sum \limits_{z<p\le T}\frac{1}{p^2}\ll A^2/z \ll 1.$$
By the prime number theorem,
$$\sum \limits_{z<p\le T}\frac{1}{p} = \log \frac{\log T}{\log z} + O\left(\frac{1}{(\log z)^2} \right)$$
so that
$$(A-1)\sum \limits_{z<p\le T}\frac{1}{p} = (A-1)\log \frac{\log T}{\log z} + O(1).$$
Hence,
$$S(T) = \sum \limits_{p \le T}A^{\Omega_{>z}(a(p))} \ll \begin{cases} T & \text{if $T \le z $,}\\ T(\log T/\log z)^{A-1}& \text{if $T > z $}.\end{cases}$$
Using Abel's summation and the fact that $1<c<2$,
$$\sum \limits_{p \le T}\frac{A^{\Omega_{>z}(p-1)}}{p^c} = \int_1^\infty t^{-c}dS(t) \le c\int_1^\infty \frac{S(t)}{t^{c+1}}dt \ll \int_1^z\frac{S(t)}{t^{c+1}}dt + \int_z^\infty\frac{S(t)}{t^{c+1}}dt$$
$$\ll \int_1^z \frac{1}{t}dt + \frac{1}{(\log z)^{A-1}}\int_z^\infty \frac{(\log t)^{A-1}}{t^c}dt.$$
We know $\int_1^z \frac{1}{t}dt = \log z$ and
$$\int_z^\infty \frac{(\log t)^{A-1}}{t^c}dt \le \int_1^\infty \frac{(\log t)^{A-1}}{t^c}dt.$$
Let $t = e^{u/(c-1)}$ and $dt = \frac{1}{(c-1)}e^{u/(c-1)}du$. Then,
$$\int_1^\infty \frac{(\log t)^{A-1}}{t^c}dt = \frac{1}{(c-1)^{A}}\int_1^\infty u^{A-1}e^{-u}du = \frac{\Gamma(A)}{(c-1)^{A}}.$$
Combining all the inequalities
$$\sum \limits_{n \le x}A^{\Omega_{>z}(a(n))} \le x \cdot z^{O(1)} \cdot \exp\left((c-1)\log x + O\left(\frac{\Gamma(A)}{(c-1)^{A}(\log z)^{A-1}} \right) \right).$$
Let 
$$c = 1+\frac{A}{(\log x)^{1/(A+1)} (\log z)^{(A-1)/(A+1)}}.$$
For this value of $c$,
$$(c-1)\log x + O\left(\frac{\Gamma(A)}{(c-1)^{A}(\log z)^{A-1}} \right) \ll {A}(\log x/\log z)^{1-\frac{1}{A+1}} (\log z)^{\frac{1}{A+1}} \ll \frac{\log x}{\exp((\log \log x)^{\eta/2})}.$$
\end{proof}
\noindent
Now we turn attention to the total number of prime factors of $a(n)$,
\begin{lemma}\label{Moment generating function B}
Fix $1 \le B < \sqrt[3]{2}$ and fix $a\in \{\phi,\sigma\}$. As $x \rightarrow \infty$,
$$\sum \limits_{n \le x}B^{\Omega(a(n))} \le x\exp(O_B((\log x)^{3/4})).$$
\end{lemma}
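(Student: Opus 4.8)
The plan is to run the proof of Lemma \ref{Moment generating function A} almost verbatim, with $\Omega_{>z}$ replaced by the full $\Omega$ and with the smoothness cutoff $z$ removed; the absence of that cutoff is what forces the hypothesis $B<\sqrt[3]{2}$. First I would apply Rankin's trick exactly as before: for $c\in(1,2)$,
\[
\sum_{n\le x}B^{\Omega(a(n))}\le x^c\exp\left(\sum_{p\le x}\sum_{k=1}^\infty\frac{B^{\Omega(a(p^k))}}{p^{kc}}\right),
\]
and then isolate the $k=1$ term (the main term) from the higher prime-power tail $k\ge2$.

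For the main term I would reuse the divisor decomposition. Define $g$ multiplicatively by $B^{\Omega(d)}=\sum_{r\mid d}g(r)$, so that now $g(p^e)=B^e-B^{e-1}$ for \emph{every} prime $p$, the restriction to $p>z$ having disappeared. Writing $S(T)=\sum_{p\le T}B^{\Omega(a(p))}=\sum_{r\le T}g(r)\,\#\{p\le T:r\mid a(p)\}$ and using $\#\{p\le T:r\mid a(p)\}\le(T+1)/r$ (valid for both $a=\phi$ and $a=\sigma$, exactly as in Lemma \ref{Moment generating function A}, since $r\mid a(p)$ confines $p$ to one residue class mod $r$), I would bound
\[
S(T)\le(T+1)\prod_{p\le T}\left(1+\frac{B-1}{p}+\frac{B^2-B}{p^2}+\cdots\right)\ll_B T(\log T)^{B-1},
\]
where the logarithmic power comes from $\sum_{p\le T}(B-1)/p\sim(B-1)\log\log T$ together with the convergence of the quadratic-and-higher tail of the Euler product. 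Feeding $S(T)\ll_B T(\log T)^{B-1}$ into Abel summation and the substitution $t=e^{u/(c-1)}$ gives, just as in the previous lemma,
\[
\sum_{p\le x}\frac{B^{\Omega(a(p))}}{p^c}\ll_B\frac{\Gamma(B)}{(c-1)^B}.
\]

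The one genuinely new point is the tail $k\ge2$, and this is where I expect the main (mild) obstacle to lie, because it is the sole place the hypothesis $B<\sqrt[3]{2}$ is used. Bounding crudely $\Omega(a(p^k))\le\log(a(p^k))/\log 2\le(k+1)\log p/\log 2$, the $k$-th summand is at most $p^{(k+1)\theta-kc}$ with $\theta=\log B/\log 2$; consecutive summands shrink by a factor $p^{\theta-c}<1$, so $\sum_{k\ge2}\ll p^{3\theta-2c}$. The condition $B<\sqrt[3]{2}$ is precisely $\theta<1/3$, whence $3\theta-2c\le3\theta-2<-1$ for every $c\ge1$, giving $\sum_p\sum_{k\ge2}=O_B(1)$ uniformly in $c$. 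The delicate thing to verify is this uniformity in $c$: the implied constant degrades as $B\uparrow\sqrt[3]{2}$, but for each fixed admissible $B$ it is a harmless finite constant.

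Finally I would optimize. Collecting the pieces yields
\[
\sum_{n\le x}B^{\Omega(a(n))}\le x\exp\left((c-1)\log x+O_B\!\left(\frac{\Gamma(B)}{(c-1)^B}\right)+O_B(1)\right),
\]
and the choice $c-1=(\log x)^{-1/(B+1)}$ balances the two main terms, each becoming $\ll_B(\log x)^{B/(B+1)}$. Since $B<\sqrt[3]{2}<3$ forces $B/(B+1)<3/4$, this gives $\sum_{n\le x}B^{\Omega(a(n))}\le x\exp(O_B((\log x)^{3/4}))$, as claimed; the exponent $3/4$ is simply a convenient round number lying comfortably above the true exponent $B/(B+1)$.
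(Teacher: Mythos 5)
Your argument is correct, and its skeleton (Rankin's trick, splitting off the prime-power tail $k\ge 2$, using $B<\sqrt[3]{2}$ to make that tail $O_B(1)$, then Abel summation on $S(T)=\sum_{p\le T}B^{\Omega(a(p))}$ and an optimal choice of $c$) matches the paper's proof exactly. The one place you diverge is the bound on $S(T)$: the paper takes the shortcut $B^{\Omega(a(p))}\le 2^{\Omega(a(p))}$ and cites the standard estimate $\sum_{d\le T}2^{\Omega(d)}\ll T(\log T)^2$ from Tenenbaum, which yields $\sum_p B^{\Omega(a(p))}/p^c\ll 1/(c-1)^3$ and the choice $c=1+(\log x)^{-1/4}$, landing precisely on the stated exponent $3/4$. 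You instead rerun the convolution/Euler-product argument of Lemma \ref{Moment generating function A} without the cutoff $z$ (legitimate here because the geometric ratio $B/p\le B/2<1$ keeps every Euler factor and the quadratic tail under control), obtaining the sharper $S(T)\ll_B T(\log T)^{B-1}$ and hence $\Gamma(B)/(c-1)^B$; this is exactly the refinement the author flags in the remark immediately following the lemma, and after optimizing $c$ it actually proves the stronger bound $x\exp(O_B((\log x)^{B/(B+1)}))$ with $B/(B+1)<0.56$, which of course implies the stated $(\log x)^{3/4}$. So your route costs a little more work on the main term but buys a strictly better exponent; the paper's route is shorter because it only needs the lemma as stated.
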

\begin{proof}
Let $c \in (1,2)$. By Rankin's trick,
$$\sum \limits_{n \le x}B^{\Omega(a(n))} \le x^c\sum \limits_{n \le x}\frac{B^{\Omega(a(n))}}{n^c} \le x^c \prod \limits_{p \le x}\left(1+\sum \limits_{k=1}^\infty\frac{B^{\Omega(a(p^k))}}{p^{kc}} \right)$$
$$\le x^c\exp \left(\sum \limits_{p\le x}\left[\frac{B^{\Omega(
a(p))}}{p^{c}} + \sum \limits_{k=2}^\infty\frac{B^{\Omega(
a(p^k))}}{p^{kc}}\right] \right).$$
Notice that $\Omega(a(p^k)) \le \frac{(k+1)\log p - \log(p-1)}{\log 2}$. Hence,
$$\sum \limits_{k=2}^\infty\frac{B^{\Omega(a(p^k))}}{p^{kc}} \le \frac{B^{\log(p)/\log(2)}}{B^{\log(p-1)/\log(2)}}\left( \sum \limits_{k=2}^\infty\frac{B^{k\log(p)/\log(2)}}{p^{kc}} \right).$$
Since $B < \sqrt[3]{2}<2$,
$$B^{\log(p)/\log(2)} = p^{\log B/\log 2} = p^{1 - (1-\log B/\log 2)}< p.$$
This implies
$$\sum \limits_{k=2}^\infty\frac{B^{k\log(p)/\log(2)}}{p^{kc}} \ll_B \frac{B^{2\log(p)/\log(2)}}{p^{2c}}$$
and,
$$x^c\exp \left(\sum \limits_{p\le x}\sum \limits_{k=1}^\infty\frac{B^{\Omega(a(p^k))}}{p^{kc}} \right) \le x\exp\left((c-1)\log x + O_B\left(\sum \limits_{p \le x}\left[\frac{B^{\Omega(
a(p))}}{p^{c}} + \frac{B^{\frac{3\log(p) - \log(p-1)}{\log(2)}}}{p^{2c}}\right] \right)\right).$$
From our choices of $B$ and $c$, 
$$3\log(B)/\log(2) - 2c < 3\log(\sqrt[3]{2})/\log(2) - 2 = -1.$$
Therefore,
$$\sum \limits_{p \le x}\frac{B^{\frac{3\log(p) - \log(p-1)}{\log(2)}}}{p^{2c}} \le \sum \limits_{p \le x}\frac{B^{\frac{3\log(p)}{\log(2)}}}{p^{2c}} = \sum \limits_{p \le x} p^{3\log(B)/\log(2) - 2c} = O(1).$$
Since $a(p) \in \{p-1,p+1\}$, $\sum \limits_{p \le T}{B^{\Omega(a(p))}} =\sum \limits_{p \le T}{B^{\Omega(a(p))}} \le \sum \limits_{d \le T+1}{2^{\Omega(d)}} \ll T(\log T)^2$ for all $T \ge 2$ (see \cite[Exercise 57, p. 59]{Ten}).\\
\\
Using partial summation as in the proof of Lemma \ref{Moment generating function A},
$$\sum \limits_{p \le x}\frac{B^{\Omega(a(p))}}{p^c} \ll \frac{1}{(c-1)^3}$$
so that
$$\sum \limits_{n \le x}B^{\Omega(a(n))} \le x\exp\left((c-1)\log x + O_B\left(\frac{1}{(c-1)^3} \right)\right).$$
Taking $c = 1+\frac{1}{(\log x)^{1/4}}$ completes the proof.
\end{proof}
\noindent
Similar to the proof in Lemma \ref{Moment generating function A}, it can be shown that $\sum \limits_{d \le T+1}B^{\Omega(d)} \ll T(\log T)^{B-1}$, which would allow $c = 1+ \frac{1}{(\log x)^{1/3}}$. Hence, Lemma \ref{Moment generating function B} can be improved as follows:
$$\sum \limits_{n \le x} B^{\Omega(a(n))} \le x\exp(O_B((\log x)^{2/3})).$$
\section{Proof of Theorem \ref{New Pomerance upper bound}}\label{Proof of Theorem 1.1}
This section will cover the proof of Theorem \ref{New Pomerance upper bound} by induction on $k$. For $k=1$, Theorem \ref{New Pomerance upper bound} follows from the sharper Theorem A. Now we assume the inductive hypothesis:
\begin{center}
As $n \rightarrow \infty$, for any $\beta < k-1$, $N_a^k(n) \le \frac{n}{L_{k,\beta+1}(n)^{1+o(1)}}$
\end{center}
and we prove 
\begin{center}
As $n \rightarrow \infty$, for any $\alpha < k$, $N_a^{k+1}(n) \le \frac{n}{L_{k+1,\alpha+1}(n)^{1+o(1)}}$.
\end{center}
To find an upper bound for the size of the preimages for the proof, we use that $\phi(n)$ and $\sigma(n)$ are within a small factor of $n$ (see \cite[pg 266-267]{Hardy}):
\begin{lemma}
There are positive constants $c_1$ and $c_2$ such that for all positive integers n,
$\sigma(n) \le c_1 n\log \log(3n)$ and $\phi(n) \ge c_2 n/\log \log(3n)$.
\end{lemma}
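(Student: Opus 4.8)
The plan is to reduce both inequalities to a single estimate on the multiplicative quantity $\prod_{p \mid n}(1 - 1/p)^{-1}$. Writing $n = \prod_{p^a \| n} p^a$, one has the exact identities $\phi(n)/n = \prod_{p \mid n}(1 - 1/p)$ and $\sigma(n)/n = \prod_{p^a \| n}(1 - p^{-(a+1)})/(1 - p^{-1})$. Bounding the numerator in the $\sigma$-product by $1$ gives $\sigma(n)/n \le \prod_{p \mid n}(1 - 1/p)^{-1} = n/\phi(n)$. Thus the two claimed bounds are both consequences of the single inequality $\prod_{p \mid n}(1 - 1/p)^{-1} \le c\,\log\log(3n)$ for a suitable absolute constant $c$; I would prove this and then read off $c_1 = c$ and $c_2 = 1/c$.

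To establish that inequality, I would exploit that each factor $(1 - 1/p)^{-1}$ exceeds $1$ and decreases with $p$, so for fixed $\omega(n)$ the product is largest when the prime divisors are as small as possible. Concretely, if $q_1 < q_2 < \cdots$ denotes the sequence of all primes and $\omega(n) = k$, then the distinct primes $p_1 < \cdots < p_k$ dividing $n$ satisfy $p_i \ge q_i$, whence $\prod_{p \mid n}(1 - 1/p)^{-1} \le \prod_{i \le k}(1 - 1/q_i)^{-1}$. The next step is to control the range of this product via the size of $n$: because $n \ge \prod_{i \le k} q_i = \exp(\theta(q_k))$ with $\theta$ Chebyshev's function, Chebyshev's lower bound $\theta(y) \gg y$ forces $q_k \ll \log n$.

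Finally I would invoke Mertens' third theorem, $\prod_{q \le y}(1 - 1/q)^{-1} \sim e^{\gamma}\log y$, with $y = q_k$, to obtain $\prod_{i \le k}(1 - 1/q_i)^{-1} \ll \log q_k \ll \log\log n$. Replacing $\log\log n$ by $\log\log(3n)$ costs nothing for large $n$ and, crucially, keeps the right-hand side positive and bounded below for the finitely many small $n$ (in particular $n = 1, 2$, where $\omega(n)$ and the product are both tiny); those can be absorbed into the constant. This yields $\prod_{p \mid n}(1 - 1/p)^{-1} = O(\log\log(3n))$ uniformly in $n$, and the lemma follows.

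I do not anticipate a serious obstacle, as this is a classical estimate. The only points requiring care are making the \emph{smallest-primes-are-worst} majorization precise and ensuring the bound holds uniformly down to small $n$ rather than merely asymptotically — which is exactly why the statement is phrased with $\log\log(3n)$ instead of $\log\log n$. Handling these via the primorial inequality $n \ge \exp(\theta(q_k))$ and a final adjustment of the constant is routine.
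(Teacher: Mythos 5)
Your proposal is correct. Note, however, that the paper does not prove this lemma at all: it is quoted directly from Hardy and Wright \cite[pp.\ 266--267]{Hardy}, so there is no in-paper argument to compare against. Your reduction of both bounds to the single estimate $\prod_{p \mid n}(1-1/p)^{-1} \ll \log\log(3n)$, followed by the majorization by the first $\omega(n)$ primes, the primorial bound $q_k \ll \log n$ via Chebyshev, and Mertens' third theorem, is exactly the classical proof of the extremal orders of $\phi$ and $\sigma$ (essentially Theorems 323 and 328 of Hardy and Wright), and your handling of small $n$ via the shift to $\log\log(3n)$ is the right way to get the stated uniform version.
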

\noindent
Also, an estimate is needed from \cite{Normal} to bound the number of $m$ for which $a(m)$ is a multiple of a given integer $d$. 
\begin{lemma}
Let $a(n)$ be any of the functions $\phi(n)$ or $\sigma(n)$. Let $d$ be a positive integer and let $\ell = \Omega(d)$. For each $x \ge 1$, the number of $n \le x$ where $d \mid a(n)$ is at most
$$\frac{x}{d}(8\ell\log^2(ex))^\ell.$$
\end{lemma}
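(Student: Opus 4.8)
The plan is to exploit the multiplicativity of $a$ to localize the divisibility $d\mid a(n)$ onto a bounded number of prime-power factors of $n$, and then to bound a sum of local weights attached to those factors. Since $a$ is multiplicative, $a(n)=\prod_{p^e\parallel n}a(p^e)$, where $p^e\parallel n$ means $p^e$ is the exact power of $p$ dividing $n$. From $d\mid a(n)$, a greedy distribution of the prime factors of $d$ (counted with multiplicity) among the factors $a(p^e)$ yields a factorization $d=\prod_{p\mid n}d_p$ with $d_p\mid a(p^e)$ for the relevant $e$. Calling a prime $p\mid n$ \emph{active} when $d_p>1$, there are at most $\ell=\Omega(d)$ active primes. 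Letting $s$ be the product of the active prime-power factors of $n$, we have $d\mid a(s)$, $s\mid n$, and every prime power dividing $s$ carries a nontrivial divisor of $d$. Since for each admissible $s$ the number of $n\le x$ with $s\mid n$ is at most $x/s$, we get
$$N(d;x):=\#\{n\le x:\ d\mid a(n)\}\ \le\ x\sum_{s}\frac1s,$$
the sum being over those $s$ all of whose prime-power factors are active and which satisfy $d\mid a(s)$.

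Next I would expand this sum over ordered factorizations. Writing $s=p_1^{e_1}\cdots p_r^{e_r}$ and $d=\delta_1\cdots\delta_r$ with $\delta_i=d_{p_i}>1$ and $\delta_i\mid a(p_i^{e_i})$, and relaxing the requirement that the $p_i$ be distinct, we obtain
$$\sum_s\frac1s\ \le\ \sum_{\substack{d=\delta_1\cdots\delta_r\\ \delta_i>1}}\ \prod_{i=1}^r f(\delta_i),\qquad f(\delta):=\sum_{\substack{p^e\le x\\ \delta\mid a(p^e)}}p^{-e},$$
where the outer sum runs over ordered factorizations of $d$ into parts exceeding $1$. (Keeping the restriction $p^e\le x$ is essential, as it is what makes each $f(\delta)$ finite and produces the logarithmic factors below.)

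The key local estimate I would establish is $f(\delta)\ll \log(ex)/\delta$. The term $e=1$ is the main contribution: $\delta\mid a(p)$ forces $p$ into the single residue class $p\equiv 1$ (for $\phi$) or $p\equiv-1$ (for $\sigma$) modulo $\delta$, and any such prime exceeds $\delta/2$, so a crude count of integers lying in one progression gives $\sum_{p\equiv\pm1\,(\delta)}p^{-1}\ll \log(ex)/\delta$. The terms with $e\ge2$ are controlled using that $\delta\mid a(p^e)$ together with $a(p^e)\le 2p^e$ forces $p^e>\delta/2$; this bounds each term by $2/\delta$ and simultaneously restricts the prime powers severely, and one checks that their total contribution is again $\ll1/\delta$.

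Finally I would assemble the pieces. Because $\prod_i\delta_i=d$, the local bound gives $\prod_i f(\delta_i)\ll (C\log(ex))^r/d\ll (C\log(ex))^\ell/d$, there being at most $\ell$ parts. Bounding the number of ordered factorizations of $d$ into parts $>1$ by the number of ordered partitions of its $\ell$ prime factors — at most $\ell^{\ell}$ up to a harmless polynomial factor in $\ell$ — yields $\sum_s 1/s\le \frac1d(2C\ell\log(ex))^\ell$, whence $N(d;x)\le \frac xd(8\ell\log^2(ex))^\ell$; the square on the logarithm and the constant $8$ provide exactly the slack needed to absorb the implied constants (note $\log(ex)\ge1$). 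I expect the main obstacle to be the uniform control of the local factor $f(\delta)$ over all divisors $\delta$ of $d$ — in particular the prime-power terms $e\ge2$ for $\sigma$, where $\delta\mid\sigma(p^e)$ is a genuine polynomial congruence in $p$ and one must check that the number of admissible residue classes never erodes the $1/\delta$ saving.
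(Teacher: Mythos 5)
The paper does not actually prove this lemma: it is imported verbatim from Pollack and Vandehey \cite{Normal}, so there is no in-paper argument to compare against. Your global skeleton is the standard one and is sound as far as it goes: multiplicativity of $a$, the greedy factorization $d=\prod_p d_p$ with $d_p\mid a(p^e)$, the observation that at most $\ell$ primes are active, the reduction to $x\sum_s 1/s$, the expansion over ordered factorizations of $d$ into parts exceeding $1$, and the bound $\le\sum_{r\le\ell}r^{\ell}\le\ell^{\ell+1}$ on the number of such factorizations are all correct.

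The genuine gap is the local estimate $f(\delta)\ll\log(ex)/\delta$ for the prime powers with $e\ge2$, which you assert rather than prove, and which the reasons you give do not deliver. From $p^e>\delta/2$ alone, the sum $\sum_{e\ge2,\,p^e>\delta/2}p^{-e}$ is of order $\delta^{-1/2}$ (dominated by $p^2$ with $p$ just above $\sqrt{\delta/2}$), not $\delta^{-1}$, so ``each term is $<2/\delta$ and the prime powers are severely restricted'' cannot close the argument; one must use the congruence $\delta\mid a(p^e)$ itself, and the route you gesture at --- counting roots of $\sigma(X^e)\equiv 0\pmod{\delta}$ --- is precisely where uniformity is delicate, since root counts of a degree-$e$ polynomial modulo the prime-power divisors of $\delta$ need not be $O_e(1)$. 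The clean fix, which also explains why the lemma carries $\log^2(ex)$ rather than the single logarithm your claimed local bound would produce, is to parametrize by the value $v=a(p^e)$: this is a multiple of $\delta$ with $\delta\le v<2x$ and $p^e\le 2v$, and for each fixed exponent $e$ the map $p\mapsto a(p^e)$ is strictly increasing, so each $v$ has at most one prime-power preimage per $e\le\log(4x)/\log 2$, hence $O(\log(ex))$ preimages in all. Therefore $f(\delta)\le O(\log(ex))\sum_{\delta\mid v,\ v<2x}2/v\ll\log^2(ex)/\delta$ uniformly in $\delta$, for both $\phi$ and $\sigma$. Feeding this into your assembly step recovers the stated bound, though the constant $8$ then requires honest bookkeeping, since the second logarithm is no longer free slack for absorbing implied constants.
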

\noindent
It follows from Lemma 3 that for each fixed $k$ and all large enough positive integers $n$, any solution $m$ of the equation $a_k(m)=n$ satisfies $m \le n\log(2n)$. \\
\\
Thus, we set $x = n\log(2n)$.\\
\\
Fix $a \in \{\phi,\sigma\}$ and define $S = a_k^{-1}(n) = \{\ell:a_{k}(\ell) = n \}$. For each $\ell \in S$, let $a(m) = \ell$ and write $\ell = \ell'\ell''$, where $\ell'$ and $\ell''$ are the $z$-rough and $z$-smooth parts of $\ell$ respectively and $z = \exp((\log \log x)^{1/2})$ as defined in Section \ref{Bounds proof}. Below, $S$ is partitioned into 3 sets ($P$, $Q$ and $R$) by the number of $z$-rough prime factors and number of prime factors in total. Hence, if $a_{k+1}(m) = n$, then $a(m) \in S$, so that $m \in a^{-1}(P) \cup a^{-1}(Q) \cup a^{-1}(R)$. We note, as will be important below, that every element of $a^{-1}(P) \cup a^{-1}(Q) \cup a^{-1}(R)$ is at most $x$.\\
\\
\\
Our first set will contain the values of $\ell$ with many large prime factors. Let $\alpha < k$ and choose $\beta$ with $\alpha < \beta+1 < k$. Define $P =\{\ell \in S:\Omega_{>z}(\ell) \ge \log L_{k+1,\alpha}(x)$\}. If $a(m) \in P$, $\Omega_{>z}(a(m)) \ge \log L_{k+1,\alpha}(x)$. Moreover, by Lemma 1, 
$$\sum \limits_{ \overset{m \le x}{\Omega_{>z}(a(m)) \ge \log L_{k+1,\alpha}(x)}}1 \le \frac{1}{A^{\log L_{k+1,\alpha}(x)}}\sum \limits_{m \le x}A^{\Omega_{>z}(a(m))}$$
$$\le x\exp\left(\frac{\log x}{\exp(\log \log x)^{\eta/2}} -(1-\eta) \log L_{k+1,\alpha+1}(x)\right)= \frac{x}{L_{k+1,\alpha+1}(x)^{(1-\eta)+o(1)}}.$$
Since $\eta$ can be taken arbitrarily close to 0, we conclude that $\#a^{-1}(P) \le \frac{x}{L_{k+1,\alpha+1}(x)^{1+o(1)}}$. \\
    \\
    \\
Next, we focus on the second set containing the values of $\ell \notin P$ with a large number of total prime factors. Let $Q = \left\{\ell \in S\backslash P: \Omega(\ell) \ge {\log x}/{(\log \log x)^{k+\frac{1}{2}}} \right\}$. If $a(m) \in Q$, $\Omega(a(m)) \ge {\log x}/{(\log \log x)^{k+\frac{1}{2}}}$. By Lemma 2 setting $B = \frac{\sqrt[3]{2}+1}{2}$, 
$$\sum \limits_{ \overset{m \le x}{\Omega(a(m)) \ge \frac{\log x}{(\log \log x)^{k+\frac{1}{2}}}}}1 \le \frac{1}{B^{\frac{\log x}{(\log \log x)^{k+\frac{1}{2}}}}}\sum \limits_{m \le x}B^{\Omega(a(m))}$$
$$\le x\exp\left(C_B(\log x)^{3/4} - \frac{\log x}{(\log \log x)^{k+\frac{1}{2}}}\log(B) \right) \le x \exp\left((-D+o(1))\frac{\log x}{(\log \log x)^{k+\frac{1}{2}}}\right)$$
for some positive constant $D$.\\
Hence, an upper bound for $\#a^{-1}(Q)$ is $x \exp\left((-D+o(1))\frac{\log x}{(\log \log x)^{k+\frac{1}{2}}}\right)$.\\
\\
\\
Our final set $R$ will contain everything else in $S$: $R = S\backslash (P \cup Q)$. Given $\ell$, fix $\ell'$. Let $j = \Omega(\ell') = \Omega_{>z}(\ell) < \log L_{k+1,\alpha}(x)$. Since $\ell' \mid \ell = a(m)$, we can use Lemma 4 to find an upper bound on the number of $m$ corresponding to a fixed $\ell'$. The upper bound is given as follows:
    $$\frac{x}{\ell'}(8j \log^2 (ex))^j \le \frac{x}{\ell'}\exp\left((3+o(1))\log L_{k,\alpha}(x)\right) = \frac{xL_{k,\alpha}(x)^{3+o(1)}}{\ell'}.$$
    Since $\Omega(\ell) < \frac{\log x}{(\log \log x)^{k+\frac{1}{2}}}$, $\ell'' \le z^{\Omega(\ell)} < \exp(\log x/ (\log \log x)^k)= L_{k,\alpha}(x)^{o(1)}$. Summing over all $\ell'$ to obtain an upper bound on $\#a^{-1}(R)$, we get
    $$xL_{k,\alpha}(x)^{3+o(1)}\sum \limits_{\ell \in R}\frac{1}{\ell'}= xL_{k,\alpha}(x)^{3+o(1)}\sum \limits_{\ell \in R}\frac{\ell''}{\ell}\le xL_{k,\alpha}(x)^{3+o(1)}\sum \limits_{\ell \in S}\frac{1}{\ell}.$$
Since  $a(\ell) = n$, by Lemma 3, $\ell \ge c\frac{n}{\log \log 3n}$ for a certain absolute positive constant $c$, and so by our inductive hypothesis, $\sum \limits_{\ell \in S}\frac{1}{\ell} \le \frac{c\log \log 3n}{n}\sum \limits_{\ell \in S}1 \le L_{k,\beta+1}(n)^{-1+o(1)} = L_{k,\beta+1}(x)^{-1+o(1)}$.
Therefore, from our choice of $\alpha$,
        $$\#a^{-1}(R) \le x\exp\bigg((3+o(1))\log L_{k,\alpha}(x) -(1+o(1))\log L_{k,\beta+1}(x)\bigg)  = \frac{x}{L_{k,\beta+1}(x)^{1+o(1)}}.$$
We add all the upper bounds together to get an upper bound for $N_a^{k+1}(n) = \#a^{-1}(S)$. We will get
$$N_a^{k+1}(n) = \#a^{-1}(S)  =\#a^{-1}(P)+\#a^{-1}(Q)+\#a^{-1}(R)$$
$$\le \frac{x}{L_{k+1,\alpha+1}(x)^{1+o(1)}} + x\exp\left((-D+o(1))\frac{\log x}{(\log \log x)^{k+\frac{1}{2}}}\right) + \frac{x}{L_{k,\beta+1}(x)^{1+o(1)}} $$
$$\le \frac{x}{L_{k+1,\alpha+1}(x)^{1+o(1)}} = \frac{n}{L_{k+1,\alpha+1}(n)^{1+o(1)}},$$
thus completing the proof of Theorem \ref{New Pomerance upper bound}.
\section{Remarks}\label{Remarks for lower bound}
The proof can be extended to any order of composition of $\phi$ and $\sigma$:\\
\\
Let $a^{(1)}, \hdots, a^{(k)} \in \{\phi, \sigma\}$ for a fixed $k \in \mathbb{N}$, and define $A = a^{(1)} \circ \cdots \circ a^{(k)}$. Then for fixed $\beta<k-1$, as $n \rightarrow \infty$, 
we have
$$\#\{m: A(m) = n\} \le \frac{n}{L_{k,\beta+1}(n)^{1+o(1)}}.$$
It is interesting to speculate about the sharp maximal order of $\#\phi_k^{-1}(m)$. When $k=1$, we have noted already Pomerance's conjecture that there is a sequence of $m$ tending to $\infty$ along which $\#\phi^{-1}(m) = m/L(m)^{1+o(1)}$. One reason for believing this conjecture is that it becomes a theorem under the following plausible hypothesis on smooth shifted primes $p-1$:
\begin{hypothesis}
Let $\Psi(x,y)$ denote the number of integers $n \le x$ free of prime factors exceeding $y$ and $\Pi(x,y)$ denote the number of primes $p \le x$ such that $p-1$ is free of prime factors exceeding $y$. If $x \ge y$ and $y \rightarrow \infty$,
$$\frac{1}{x}\Psi(x,y) \sim \frac{1}{\pi(x)}\Pi(x,y).$$
\end{hypothesis}
\noindent
Under this hypothesis, Pomerance showed \cite{Pidgeon} that there is a set of $n \le x$ of size $x/L(x)^{1+o(1)}$ (as $x\rightarrow \infty$) for which all the values $\phi(n)$ are $\log x$ smooth. Since the cardinality of $\log x$ smooth numbers up to $x$ is only $L(x)^{o(1)}$, it follows that some $(\log x)$-smooth number $m \le x$ has at least $x/L(x)^{1+o(1)}$ preimages $n \le x$. Hence, $\#\phi^{-1}(m) \ge x/L(x)^{1+o(1)} \ge m/L(m)^{1+o(1)}$.\\
\\
Lamzouri has studied smooth values of the iterates of $\phi$. He proves in \cite{Lam}, conditional on the Elliott--Halberstam conjecture, that:
\renewcommand*{\thetheorem}{\arabic{theorem}}
\setcounter{theorem}{1}
\begin{theorem}
Define $\Phi_k(x,y) = \lvert \{m \le x: p\mid \phi_k(m) \Rightarrow p \le y\} \rvert$. Fix $U>1$. If $y = x^{1/u}$ where $1 \le u \le U$, then as $x \rightarrow \infty$,
$$\Phi_k(x,y) \sim x\rho_k(u)$$
where $\rho_0(u) = \left(\frac{e + o(1)}{u \log u}\right)^u$ and when $k \ge 1$,
$$\rho_k(u) = \left(\frac{1+o(1)}{\log_k(u) \log_{k+1}(u)} \right)^u$$
as $u \rightarrow \infty$, where $\log_k(u) = \underbrace{\log \log \cdots \log}_{\text{$k$ times}}u$.
\end{theorem}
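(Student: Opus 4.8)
The plan is to reduce the $y$-smoothness of $\phi_k(m)$ to a nested family of smoothness conditions on shifted primes, and then to run a sieve/convolution recursion in $k$ whose ``density functions'' are exactly the $\rho_k$. I would first record the base case $k=0$, where $\Phi_0(x,y)=\Psi(x,y)$ and the Dickman--de Bruijn theorem gives $\Psi(x,y)\sim x\rho(u)$ uniformly for $u\in[1,U]$, with the classical saddle-point asymptotic $\rho(u)=((e+o(1))/(u\log u))^u$; this is the stated $\rho_0$.

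For $k=1$ the key structural observation is that, writing $m=ab$ with $a$ the $y$-smooth part and $b$ the $y$-rough part, one has $\phi(m)=\phi(a)\phi(b)$ with $\phi(a)$ automatically $y$-smooth, so that $\phi(m)$ is $y$-smooth if and only if $b$ is squarefree and every prime $p\mid b$ satisfies ``$p-1$ is $y$-smooth.'' Hence
$$\Phi_1(x,y)=\sum_{b}\Psi(x/b,y),$$
the sum running over $y$-rough squarefree $b\le x$ all of whose prime factors $p$ have $p-1$ $y$-smooth. The local density of such primes near $t$ is governed by $\Pi(t,y)$, and this is where I would invoke the Elliott--Halberstam conjecture (a uniform strengthening of the Hypothesis above): it yields $\Pi(t,y)\sim\pi(t)\rho(\log t/\log y)$ throughout the relevant range, i.e. shifted primes are exactly as smooth as typical integers. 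Feeding this into a Buchstab identity, peeling off the largest prime factor of $b$ and recursing, produces a delay-type functional equation for the density $\rho_1$, analogous to $u\rho'(u)=-\rho(u-1)$ for the Dickman function.

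For general $k$ I would iterate this description: applying the $k=1$ characterization to $\phi_{k-1}(m)$ shows that $\phi_k(m)$ is $y$-smooth precisely when a certain tree of shifted-prime smoothness conditions, of depth $k$, is met, each level contributing a factor of $\Pi$ controlled by Elliott--Halberstam. Tracking these conditions gives $\Phi_k(x,y)\sim x\rho_k(u)$ for $u\in[1,U]$, where $\rho_k$ obeys a delay equation built from $\rho_{k-1}$. The remaining, purely analytic, task is to extract the behaviour of $\rho_k(u)$ as $u\to\infty$; I would do this by a saddle-point analysis of the Laplace transform of $\rho_k$ (as in the standard treatment of $\rho$), showing that each composition with $\phi$ replaces $\log_j u$ by $\log_{j+1}u$ in the base, so that $\rho_k(u)=((1+o(1))/(\log_k u\,\log_{k+1}u))^u$.

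The main obstacle I anticipate is the uniform control of the nested conditions: Elliott--Halberstam must be applied at every node of the depth-$k$ recursion tree, and the accumulated error terms must be shown to be negligible against the very small main term $x\rho_k(u)$, uniformly for $u\in[1,U]$. A secondary difficulty is the saddle-point analysis itself, where one must verify that the leading constant degrades from $e$ at $k=0$ to $1$ for $k\ge1$ and that the two innermost iterated logarithms appear with the correct exponents; this requires a careful comparison of the delay equations for successive $\rho_k$.
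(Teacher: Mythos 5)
First, a point of order: the paper does not prove this statement. It is quoted as a theorem of Lamzouri \cite{Lam}, conditional on the Elliott--Halberstam conjecture, and is used only as motivation for a speculative lower bound on $\#\phi_k^{-1}(m)$; there is no in-paper proof to compare your sketch against. That said, your outline does track the strategy one expects (and that Lamzouri follows): reduce $y$-smoothness of $\phi_k(m)$ to smoothness of shifted primes, input Elliott--Halberstam to get $\Pi(t,y)\sim\pi(t)\rho(\log t/\log y)$ in the range $y\ge t^{1/U}$, run a recursion in $k$ whose density functions are the $\rho_k$, and separately analyse $\rho_k(u)$ as $u\to\infty$. Your $k=1$ structural step (the $y$-rough part $b$ of $m$ must be squarefree with every $p\mid b$ having $p-1$ $y$-smooth) is correct.

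As a proof, however, the proposal has genuine gaps. The most serious is at the step $k\ge 2$: the $y$-rough part of $\phi(m)$ is the product, over the $y$-rough primes $p\mid m$, of the $y$-rough parts of $p-1$, and the requirement that this product be squarefree \emph{couples} distinct primes $p,p'\mid m$ --- they must not contribute a common rough prime factor through $p-1$ and $p'-1$. The conditions therefore do not factor over a tree as you claim, and one must prove that such coincidences contribute negligibly; this is a nontrivial additional sieve estimate that your sketch omits entirely. Second, your stated worry about error terms being "negligible against the very small main term $x\rho_k(u)$" conflates the two halves of the theorem: in the counting assertion $u$ is confined to $[1,U]$ with $U$ fixed, so $\rho_k(u)\gg_U 1$ and the main term has order $x$; the asymptotic $\rho_k(u)=((1+o(1))/(\log_k u\,\log_{k+1}u))^u$ is a separate, purely analytic statement about the solution of the delay equation as $u\to\infty$, which you assert but do not derive --- in particular you give no argument for why the constant $e$ in $\rho_0$ degrades to $1$ for $k\ge 1$, nor for why exactly the two innermost iterated logarithms survive. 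As it stands the proposal is a plausible roadmap consistent with the cited result, not a proof.
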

\noindent
In Lamzouri's conjecture, the smoothness parameter $y$ exceeds a fixed positive power of $x$. It is tempting to conjecture, optimistically, that Lamzouri's conjecture remains valid down to $y = \log x$. If so, we would derive (analogously to Pomerance) $\#\phi_k^{-1}(m) \ge m\exp\left((-1+o(1))\frac{\log m \log_{k+2} m}{\log_2 m} \right)$ on a sequence of $m$ tending to infinity. If true, it is also tempting to conjecture that this is sharp.

\bibliographystyle{unsrt}

\newpage
\bibliography{Dissertation/ReferencesF}

\end{document}